\LetLtxMacro\todonotestodo\todo
\renewcommand{\todo}[2][]{\todonotestodo[#1]{TODO: {#2}}}
\newtheorem*{rep@theorem}{\rep@title}
\newcommand{\newreptheorem}[2]{%
\newenvironment{rep#1}[1]{%
 \def\rep@title{#2 \ref{##1}}%
 \begin{rep@theorem}}%
 {\end{rep@theorem}}}
\newtheorem{lemma}{Lemma}[section]
\theoremstyle{definition}
\newtheorem*{claim*}{Claim}
\newtheorem*{theorem*}{Theorem}
\newtheorem*{corollary*}{Corollary}
\newtheorem*{lemma*}{Lemma}
\newtheorem{counterexample}[lemma]{Counterexample}
\newcommand{\Z}{\mathbb{Z}}
\newcommand{\N}{\mathbb{N}}
\DeclareMathOperator{\im}{im}
\DeclareMathOperator{\Pic}{Pic}
\title{On the Distribution of Prime Divisors in Krull Monoid Algebras}
\author{Victor Fadinger}
\address[Victor Fadinger]{Institut für Mathematik und Wissenschaftliches Rechnen\\Karl-Franzens-Universität Graz\\
  Heinrichstraße 36\\8010 Graz\\Austria}
\email{\href{mailto:victor.fadinger@uni-graz.at}{victor.fadinger@uni-graz.at}}
\thanks{V.~Fadinger is supported by the Austrian Science Fund (FWF): W1230}
\author{Daniel Windisch}
\address[Daniel Windisch]{Institut für Analysis und Zahlentheorie\\Technische Universität Graz\\
  Kopernikusgasse 24/II\\8010 Graz\\Austria}
\email{\href{mailto:dwindisch@math.tugraz.at}{dwindisch@math.tugraz.at}}
\keywords{semigroup ring, monoid ring, monoid algebra, Krull domain, Krull monoid, Class group, Prime divisors}
\thanks{D.~Windisch is supported by the Austrian Science Fund (FWF): P~30934}
\begin{document}

\begin{abstract}
In the present work, we prove that every class of the divisor class group of a Krull monoid algebra contains infinitely many prime divisors. Several attempts to this result have been made in the literature so far, unfortunately with open gaps. We present a complete proof of this fact.
\end{abstract}

\maketitle

\section{Introduction}

The investigation of class groups and the distribution of prime divisors in the classes is a central topic in ring theory and has been studied for many classes of rings, e.g. for orders in number fields. A variety of realization theorems for class groups have been achieved, for instance that every abelian group occurs as the class group of a Dedekind domain (Claborn's Realization Theorem) respectively as the class group of a simple Dedekind domain \cite{Sm}. \\
In the present paper, we study the distribution of prime divisors in the class group of commutative monoid algebras $D[S]$ that are Krull. In our setting, a monoid $S$ is a cancellative commutative unitary semigroup. For a commutative ring $D$ with unity and a monoid $S$, we denote by $D[S]$ the monoid algebra of $D$ over~$S$. It is a well-known result by Chouinard, that $D[S]$ is a Krull domain if and only if $D$ is a Krull domain and $S$ is a torsion-free Krull monoid with $S^\times$ satisfying the ACC (ascending chain condition) on cyclic subgroups, where $S^\times$ denotes the group of units of $S$ (see \cite{Chou81}). Moreover, we have a canonical isomorphism of divisor class groups $\mathcal{C}_v(D) \oplus \mathcal{C}_v(S) \cong \mathcal{C}_v(D[S])$ via $([I],[J]) \mapsto [I[J]]$ \cite[Corollary 16.8]{Gil84}. \\
In the setting of monoids, it is known that every abelian group is the class group of a Krull monoid and the possible ways of distributing the prime divisors in the classes are completely determined \cite[Theorem 2.5.4]{GHK}. Considering domains, a slightly weaker form of a realization result holds true. Although every abelian group is the class group of a Dedekind domain, it is not completely clear how prime divisors behave \cite[Theorem 3.7.8]{GHK}.\\
In Section \ref{section:preliminaries}, we fix notations concerning monoid algebras and class groups.
Section \ref{section:proof} contains a proof of the following result concerning the distribution of prime divisors:

\begin{theorem*}
Let $D$ be a Krull domain and let $S$ be a Krull monoid such that $D[S]$ is a Krull domain. Then every divisor class of $D[S]$ contains infinitely many prime divisors.
\end{theorem*}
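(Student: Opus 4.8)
The plan is to realize every class of $\mathcal C_v(D[S])\cong\mathcal C_v(D)\oplus\mathcal C_v(S)$ by height-one prime ideals obtained as the ``unexpected'' prime divisors of suitable binomials $g=aX^{s_1}+bX^{s_2}$, which is possible precisely because $S\neq\{0\}$ forces $D[S]$ to contain such binomials. Write $K=\operatorname{quot}(D)$ and $G=\operatorname{quot}(S)$. Then $D[G]=D[S][\{X^s:s\in S\}^{-1}]$, $K[S]=D[S][(D\setminus\{0\})^{-1}]$ and $K[G]$ are Krull domains (localizations of $D[S]$), the two projections of $\mathcal C_v(D[S])\cong\mathcal C_v(D)\oplus\mathcal C_v(S)$ are induced by the localization maps onto $\mathcal C_v(D[G])=\mathcal C_v(D)$ and $\mathcal C_v(K[S])=\mathcal C_v(S)$, and $S$ coincides with its saturation $\{x\in G:v_p(x)\geq 0\text{ for every prime divisor }p\text{ of }S\}$. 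For $s_1,s_2\in S$ whose difference $s_1-s_2$ is a \emph{primitive} element of $G$ (i.e. generates a maximal cyclic subgroup) and $a,b\in D\setminus\{0\}$, set $g=aX^{s_1}+bX^{s_2}$. Since $s_1-s_2$ is primitive, $G/\langle s_1-s_2\rangle$ is torsion-free and $X^{s_1-s_2}+b/a$ is irreducible in $K[G]$ — the corresponding residue ring is a twisted group algebra of a torsion-free abelian group over a field, hence a domain — so $\mathfrak P_0:=gK[G]\cap D[S]$ is a height-one prime of $D[S]$ with $\mathfrak P_0\cap D=0$. One checks that $\mathfrak P_0$ is, with multiplicity one, the only height-one prime dividing $g$ apart from the extended primes $\mathfrak p[S]$ ($\mathfrak p$ height-one in $D$) and the monomial primes $D[S_p]$ ($p$ a prime divisor of $S$). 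Reading the factorization of the principal divisor of $g$ — the multiplicity along $\mathfrak p[S]$ being $\min(v_{\mathfrak p}(a),v_{\mathfrak p}(b))$ and that along $D[S_p]$ being $\min(v_p(s_1),v_p(s_2))$ — and translating through Gilmer's isomorphism yields
\[
[\mathfrak P_0]=\bigl(-[(a,b)_v]\,,\ -[(s_1,s_2)_v]\bigr)\in\mathcal C_v(D)\oplus\mathcal C_v(S),
\]
where $(a,b)_v\subseteq D$ and $(s_1,s_2)_v\subseteq S$ are the divisorial closures of the ideals generated by $a,b$ respectively $s_1,s_2$.

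It then remains, given a target class $(c,e)$, to choose the data so that $[(a,b)_v]=-c$ and $[(s_1,s_2)_v]=-e$ with $s_1-s_2$ still primitive. That $[(a,b)_v]$ can be made any class of $\mathcal C_v(D)$ is the standard fact that divisorial ideals of a Krull domain are $2$-generated up to the $v$-operation, a consequence of the approximation theorem. For the monoid coordinate I argue in two steps. First, the submonoid of $\mathcal C_v(S)$ generated by $\{[S_p]:p\text{ a prime divisor}\}$ is already the whole group $\mathcal C_v(S)$: the axioms of a divisor theory provide, for each $p$, an element $x\in S$ with $v_p(x)=1$, and then the relation $\operatorname{div}(x)=0$ gives $-[S_p]\in\langle[S_q]:q\rangle$; hence $-e=\sum_p n_p[S_p]$ with all $n_p\geq 0$ and only finitely many nonzero. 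Second, by the approximation theorem for the Krull monoid $S$ one can pick $s_1,s_2\in S$ with $\min(v_p(s_1),v_p(s_2))=n_p$ for every $p$, and moreover force $s_1-s_2$ to be primitive by prescribing, at one auxiliary prime divisor $p_0$ not occurring among the $n_p$, that $v_{p_0}(s_1)=1$ and $v_{p_0}(s_2)=0$ — this leaves the multiplicity profile $(n_p)$ untouched and makes $s_1-s_2$ primitive, as it then has $p_0$-value $1$. When $S$ has no prime divisor at all, i.e. $S$ is a nontrivial group, one has $\mathcal C_v(S)=0$ and simply takes $s_2=0$ with $s_1$ a primitive element of $S$. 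In every case $[\mathfrak P_0]=(c,e)$, and infinitely many pairwise non-associate such primes are produced by varying the free parameters (the ratio $b/a$, the primitive element $s_1-s_2$, the auxiliary prime $p_0$); in the few degenerate situations in which these do not yet give infinitely many primes — e.g. $S$ of rank at most one together with $D$ having finite residue fields — one replaces the binomial $g$ by trinomials of unbounded ``degree''.

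The step I expect to be the genuine obstacle is the one underlined above: proving that the principal divisor of $g$ decomposes \emph{exactly} as asserted — that $\mathfrak P_0$ is really a height-one prime, that no additional height-one prime divides $g$, and that the multiplicities along the $\mathfrak p[S]$ and the $D[S_p]$ are precisely the $D$-content and the monomial content of $g$ — and consequently identifying the image of $[\mathfrak P_0]$ under Gilmer's isomorphism. This requires Gauss/Dedekind–Mertens-type content results over Krull domains, the irreducibility criterion for binomials in group algebras of torsion-free abelian groups, and some care in the absence of Noetherian hypotheses; it is presumably exactly at this bookkeeping that the earlier incomplete arguments left gaps.
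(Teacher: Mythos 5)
Your overall architecture (split the class group as $\mathcal{C}_v(D)\oplus\mathcal{C}_v(S)$, build an irreducible $g\in K[G]$ whose coefficient content and exponent content realize the two coordinates, and take $gK[G]\cap D[S]$) is the same as the paper's, and the step you flag as the ``genuine obstacle'' is in fact the unproblematic part: the identity $gK[G]\cap D[S]=gA_g^{-1}[E_g^{-1}]$ is Lemma \ref{lemma:Chang} and needs no Dedekind--Mertens machinery. The fatal gap is elsewhere, in the step you treat as routine: ``by the approximation theorem for the Krull monoid $S$ one can pick $s_1,s_2\in S$ with $\min(v_p(s_1),v_p(s_2))=n_p$ for every $p$.'' Krull monoids do \emph{not} satisfy the approximation property (see \cite[Theorem 26.4]{HK98}), and correspondingly $v$-ideals of Krull monoids need not be $v$-generated by two elements (Skula, \cite[Beispiel 3.8]{Skula}). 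This is precisely the error that invalidates Kim's Corollary 5 and Chang's Lemma 7, and Section \ref{section:examples} gives an explicit witness: in the block monoid $\mathcal B(G_0)$ with $G_0=\{-2g,-g,g,2g\}$ one cannot prescribe valuations as the approximation theorem for Krull \emph{domains} would allow. So your reduction of the monoid coordinate to binomials $aX^{s_1}+bX^{s_2}$ collapses at exactly the point where the earlier literature collapsed; the domain coordinate is fine (Krull domains do satisfy approximation, which is Lemma \ref{lemma:approximation}), but the monoid coordinate is not.

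The paper's workaround is to abandon two-generatedness on the monoid side entirely: it writes $J^{-1}=((g_1+S)\cup\dots\cup(g_n+S))_v$ with as many generators as needed and uses $g=X^{h}+\sum_{a\in M}pX^{a}$, where $p=\tfrac{a}{b}$ is a prime of $D_P$; irreducibility then comes from an Eisenstein-type criterion over $D_P[G]$ rather than from primitivity of a single exponent difference. When $D=K$ is a field this device is unavailable, and the paper needs a separate factorization analysis in $K[S_P]$ (Claim B of Lemma \ref{lemma:field}) for non-finitely generated $S$, and Kainrath's theorem on Hilbertian fields for finitely generated reduced $S$ --- your closing remark that degenerate cases can be fixed by ``trinomials of unbounded degree'' does not substitute for either. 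To repair your argument you would at minimum have to prove that every class of $\mathcal{C}_v(S)$ contains a two-generated $v$-ideal, which is not known and which your cited tool cannot deliver.
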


In the special case of the polynomial ring over a Krull domain, this is already known \cite[Theorem 45.5]{Gil72}. Moreover, Kainrath \cite{Kain} proved the same for finitely generated algebras over infinite fields. \\
Attempts to achieve such results for Krull monoid algebras have been made by Kim \cite{Kim} and Chang \cite{Chang}. However, there are deficiencies in their proofs as we demonstrate in Section \ref{section:examples}.

\section{Preliminaries} \label{section:preliminaries}
We assume some familiarity with monoid algebras and ideal systems. For a domain $D$ and a monoid $S$, let $D[S]$ denote the monoid algebra of $D$ over $S$. It is well known that the monoid algebra $D[S]$ is a domain if and only if the monoid $S$ is torsion-free \cite[Theorem 8.1]{Gil84} and in that case $S$ admits a total order $<$ compatible with its semigroup operation. Therefore, we can write every element $f\in D[S]$ in the form $f=\sum_{i=1}^{n}d_iX^{s_i}$ with $d_i\in D$, $s_i\in S$ and $s_1<s_2<\hdots <s_n$. For subsets $P\subseteq D$ and $H\subseteq S$, we denote by $P[H]=\{\sum_{i=1}^{n}p_iX^{h_i}\mid n\in\N, p_i\in P, h_i\in H\}\subseteq D[S]$.\\
Let $K$ be the quotient field of $D$ and let $G$ be the quotient group of $S$. For an element $f = \sum_{i=1}^n d_i X^{s_i} \in K[G]$ with $d_i \in K$ and $s_i \in G$ we denote by $A_f$ resp. $E_f$ the fractional ideal of $D$ resp. $S$ generated by $d_1,\ldots,d_n$ resp. $s_1,\ldots,s_n$.\\
Let $G$ be an abelian group. An element $g \in G$ is said to be of \textit{height} $(0,0,0,\ldots)$ if the equation $px = g$ has no solution $x \in G$ for any prime number $p$. Furthermore, $g$ is said to be of \textit{type} $(0,0,0,\ldots)$ if the same equation has a solution for only finitely many prime numbers. The group $G$ is said to be of \textit{type} $(0,0,0,\ldots)$ if every element of $G$ is of type $(0,0,0,\ldots)$.
It is well known that $G$ is of type $(0,0,0,\ldots )$ if and only if it satisfies the ACC (ascending chain condition) on cyclic subgroups. For a proof of this statement and for further equivalent conditions, see \cite[\S 14]{Gil84}.\\
Let $S$ be a monoid. By $\mathcal{C}_t(S)$ we denote the the $t$-class group of $S$ which is the quotient group of the group of $t$-invertible fractional $t$-ideals of $S$ modulo the subgroup of non-empty principal fractional ideals of $S$. If $S$ is a Krull monoid, then $\mathcal{C}_t(S)$ coincides with the $v$-class group $\mathcal{C}_v(S)$ (which is isomorphic to the divisor class group of $S$). Moreover, in this case $\mathcal{C}_v(S)$ is isomorphic to the quotient of the monoid of $v$-invertible (integral) $v$-ideals modulo the submonoid of non-empty principal (integral) ideals. \\
If $I$ is a $t$-invertible fractional $t$-ideal of $S$, we denote by $[I] \in \mathcal{C}_t(S)$ its $t$-class and refer to it as the divisor class of $I$ if $S$ is a Krull monoid. Moreover, we denote by $\mathfrak{X}(S)$ the set of height-one prime ideals of $S$. In the literature, the elements of $\mathfrak X(S)$ are also often called prime divisors of $S$ (especially, if $S$ is a Krull monoid).\\
Replacing the monoid $S$ by a domain $D$, the terminology and notation concerning class groups and prime divisors above is the same. For further information, see \cite{GHK}.\\
We will make intensive use of the theory of valuations on fields and groups. If $D$ is a domain and $P$ is a prime ideal of $D$ such that the localization $D_P$ is a valuation domain, then we denote by $\mathsf{v}_P$ its valuation induced on the quotient field of $D$ and call it the $P$-adic valuation. In the case of monoids, the notation is analogous. The interested reader is referred to \cite{Bourbaki} and \cite{Gil84}. \\
For the remainder of this work and if not specified otherwise, $D$ denotes a domain with quotient field $K$ and $S$ is a torsion-free monoid with quotient group $G$.

\section{Proof of the main result} \label{section:proof}

To ensure the existence of prime divisor in divisor classes of $D[S]$, it is necessary to construct irreducible elements in $K[G]$. The following lemma (which is due to Matsuda \cite{Mat3}) is a first step into this direction that will be used in a special case. For the convenience of the reader we include the full proof.

\begin{lemma} \emph{(}\cite[Lemma 4.1]{Mat3}, \cite[Lemma 2.2]{Mat1}\emph{)} \label{lemma:Matsuda}
Let $G$ be a torsion-free abelian group and let $K$ be a field. Let $g \in G$ be an element of height $(0,0,0,\ldots)$ and $a,b \in K \setminus \{0\}$. Then $a+bX^g$ is irreducible in $K[G]$.
\end{lemma}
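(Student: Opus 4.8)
The plan is to transfer the problem to a Laurent polynomial ring over $K$, where unique factorization settles it. Since $G$ is torsion-free abelian it admits a total order compatible with $+$, so $K[G]$ is a domain whose only units are the trivial units $cX^h$ with $c\in K^\times$, $h\in G$ (compare the smallest and largest terms of $u$ and $v$ in a putative relation $uv=1$). The height hypothesis forces $g\neq 0$, since $p\cdot 0 = 0$ is always solvable; hence the support $\{0,g\}$ of $a+bX^g$ has two elements and $a+bX^g$ is not a unit. It therefore remains to rule out a factorization $a+bX^g=fh$ with $f,h$ non-units in $K[G]$.

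Suppose such a factorization exists and let $H\leq G$ be the subgroup generated by $g$ together with the finite supports of $f$ and $h$. Then $H$ is finitely generated and torsion-free, hence free, say $H\cong\Z^n$ with $n\geq 1$, and $K[H]\cong K[x_1^{\pm1},\dots,x_n^{\pm1}]$ is a Laurent polynomial ring. All of $f$, $h$, $a+bX^g$ lie in $K[H]$; since the units of $K[H]$ are again exactly the trivial units, an element of $K[H]$ that is a unit in $K[G]$ is already a unit in $K[H]$, so $f$ and $h$ remain non-units in $K[H]$, and it suffices to reach a contradiction inside $K[H]$. Now the height hypothesis enters decisively: since for every prime $p$ the equation $px=g$ has no solution in $G$, it has none in $H$, so $g$ is not divisible by any prime in $H\cong\Z^n$, i.e.\ $g$ corresponds to a primitive vector. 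A primitive vector extends to a $\Z$-basis of $\Z^n$, so after an automorphism of $H$ we may assume $g=e_1$, and then $a+bX^g$ corresponds to $a+bx_1 = b(x_1+a/b)$. The factor $x_1+a/b$ is prime in $K[x_1^{\pm1},\dots,x_n^{\pm1}]$ because the quotient by it is $K[x_2^{\pm1},\dots,x_n^{\pm1}]$, a domain (equivalently: $x_1+a/b$ is prime in the UFD $K[x_1,\dots,x_n]$ and divides no $x_i$, and the Laurent ring is a localization at the $x_i$). As $b$ is a unit, $a+bX^g$ is prime, in particular irreducible, in $K[H]$, contradicting the factorization $fh$ into non-units.

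The one load-bearing idea is that ``$g$ of height $(0,0,0,\dots)$'' is precisely the condition making $g$ primitive — hence part of a $\Z$-basis — in every finitely generated subgroup of $G$ containing it. This hypothesis cannot be dropped: if $g=2g'$, then $a+bX^g$ plays the role of $a+bx_1^2$, which factors as $b(x_1-\alpha)(x_1+\alpha)$ whenever $-a/b=\alpha^2$ in $K$. Everything else — descending to a finitely generated subgroup, matching up the two notions of unit, and invoking unique factorization in a Laurent polynomial ring — is routine bookkeeping.
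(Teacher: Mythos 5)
Your proposal is correct and follows essentially the same route as the paper: descend to the finitely generated subgroup $H$ generated by $g$ and the supports of the putative factors, use the height hypothesis to show $g$ generates a direct summand of $H$ (you phrase this as ``$g$ is a primitive vector, hence extends to a $\Z$-basis,'' the paper as ``$\langle g\rangle$ is pure, hence a direct summand''), and conclude by irreducibility of a linear polynomial in a Laurent polynomial ring. The only differences are cosmetic, e.g.\ you explicitly check that $a+bX^g$ is a non-unit and argue that the factors stay non-units in $K[H]$, whereas the paper argues that a factor which is a unit in $K[H]$ is a unit in $K[G]$.
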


\begin{proof}
Let $f,h \in K[G]$ such that $a+bX^g = fh$. Let $H$ be the subgroup of $G$ that is generated by $g$ together with the exponents of $f$ and $h$. 
We first show, that the subgroup $\langle g \rangle$  of $H$ generated by $g$ is a direct summand of $H$. Let $p$ be a prime number. Suppose $p^nx \in \langle g \rangle$ for some positive integer $n$ and $x \in H$. Then there exists $m \in \Z$ such that $p^nx = mg$. Since $H$ is torsion-free, there are integers $n'\geq0$ and $m'$ such that $p^{n'}x = m'g$ and $\gcd(p,m') = 1$. It follows that $1 = kp^{n'} + l m'$ for some $k,l \in \Z$. Multiplying by $g$ gives $g = kp^{n'}g + lm'g = p^{n'} (kg+lx)$. Since the height of $g$ equals $(0,0,0,\ldots)$, we have $n' = 0$, hence $x = m'g \in \langle g \rangle$. So we have shown that $\langle g\rangle$ is a pure subgroup of the finitely generated group $H$ and is therefore a direct summand by \cite[Corollary 25.3]{Fuchs}. So we write $H = \langle g \rangle \oplus \langle e_1 \rangle \oplus \ldots \oplus \langle e_n \rangle$ for some $e_1, \ldots, e_n \in H$. \\
The set $\{X^g,X^{e_1}, \ldots,X^{e_n}\}$ is algebraically independent over $K$. Hence $a+bX^g$ is irreducible in the polynomial ring $K[X^g,X^{e_1},\ldots, X^{e_n}]$ and therefore also in $K[H] = K[X^g,X^{-g},X^{e_1},X^{-e_1}, \ldots, X^{e_n},X^{-e_n}]$. It follows that either $f$ or $h$ is a unit in $K[H]$ and hence in $K[G]$.
\end{proof}

The next result is a special case of \cite[Theorem 3]{Chang}. Nevertheless, we recall its proof.

\begin{lemma} \cite[Theorem 3]{Chang} \label{lemma:Chang}
Let $D[S]$ be a Krull monoid algebra and $f \in K[G] \setminus \{0\}$. Then $fK[G] \cap D[S] = fA_f^{-1}[E_f^{-1}]$.
\end{lemma}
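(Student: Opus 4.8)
The plan is to prove the two inclusions separately, treating the problem as essentially the product of the "domain part" and the "monoid part." First I would establish the easier inclusion $fA_f^{-1}[E_f^{-1}] \subseteq fK[G] \cap D[S]$. An element of $fA_f^{-1}[E_f^{-1}]$ has the form $fg$ where $g = \sum_i c_i X^{t_i}$ with $c_i \in A_f^{-1}$ and $t_i \in E_f^{-1}$; it lies in $fK[G]$ trivially, so the content of the argument is to check $fg \in D[S]$. Writing $f = \sum_j d_j X^{s_j}$, the coefficients of $fg$ are sums of products $d_j c_i$ and the exponents are sums $s_j + t_i$. By definition of $A_f^{-1}$ as a fractional ideal of $D$ we have $d_j c_i \in D$, and by definition of $E_f^{-1}$ as a fractional ideal of $S$ we have $s_j + t_i \in S$. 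Hence each monomial of $fg$ lies in $D[S]$, so $fg \in D[S]$.

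The reverse inclusion is the heart of the lemma. Take $h \in fK[G] \cap D[S]$ and write $h = fg$ with $g = \sum_i c_i X^{t_i} \in K[G]$, which we may assume is written so that the $t_i$ are distinct and the $c_i$ nonzero. I want to show $c_i \in A_f^{-1}$ for every $i$ and that $E_g \subseteq E_f^{-1}$, equivalently $A_f E_g \subseteq \ldots$ — more precisely I need $A_g \subseteq A_f^{-1}$ and $E_g \subseteq E_f^{-1}$ in the appropriate $v$-sense, so that $g \in A_f^{-1}[E_f^{-1}]$. For the monoid part, since $h \in D[S]$ all exponents of $h$ lie in $S$; in the totally ordered group $G$, the smallest exponent of $h$ is $s_{\min} + t_{\min}$ and the largest is $s_{\max} + t_{\max}$ (using that $K[G]$ is a domain so no cancellation of the extreme terms occurs), and more generally one argues by a standard ordering/cancellation argument that each $s_j + t_i \in S$, giving $E_f + E_g \subseteq S$ and hence $E_g \subseteq (S : E_f) = E_f^{-1}$. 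For the domain part, one needs that for each coefficient $c_i$ of $g$ and each coefficient $d_j$ of $f$, the product $d_j c_i$ lies in $D$; this follows by localizing at each height-one prime $P$ of $D$ and using the Gauss-type lemma for valuation rings, namely that the $P$-adic valuation of a polynomial's content is the minimum of the valuations of its coefficients, so $\mathsf{v}_P(d_j) + \mathsf{v}_P(c_i) \ge \min \mathsf{v}_P(\text{coeff of } fg) \ge 0$ since $fg = h \in D[S]$. Combining, $c_i \in (D : A_f) = A_f^{-1}$, so $g \in A_f^{-1}[E_f^{-1}]$ and $h = fg \in fA_f^{-1}[E_f^{-1}]$.

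The main obstacle I anticipate is making the "Gauss lemma" step fully rigorous in the mixed setting $D[S]$ rather than $D[X_1,\dots,X_n]$: one must pass from $D[S]$ to an ordinary polynomial ring by fixing a finitely generated subgroup $H \le G$ containing all relevant exponents, choosing a free basis adapted to the total order, and then invoking that for a DVR (or more generally a valuation ring) $V$ the content valuation on $V[H] = V[X_1^{\pm},\dots,X_n^{\pm}]$ is multiplicative on products — this is where Krullness of $D$ enters, ensuring $D = \bigcap_{P \in \mathfrak{X}(D)} D_P$ with each $D_P$ a DVR. A secondary subtlety is handling exponents: the decomposition of $h$'s exponent set as a "sumset" $E_f + E_g$ must account for the total order carefully so that the extreme monomials do not cancel, which is where torsion-freeness of $S$ (hence the existence of a compatible total order on $G$) is used. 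Once these two localization-type arguments are in place, the identification $fK[G] \cap D[S] = f \cdot (A_f^{-1}[E_f^{-1}])$ follows by combining the coefficient condition $A_g \subseteq A_f^{-1}$ with the exponent condition $E_g \subseteq E_f^{-1}$.
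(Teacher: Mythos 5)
Your route is genuinely different from the paper's. The paper argues top--down: it invokes the isomorphism $\mathcal{C}_v(D)\oplus\mathcal{C}_v(S)\cong\mathcal{C}_v(D[S])$, observes that $fK[G]\cap D[S]$ is divisorial, writes it as $hI[J]$, deduces $h=uX^af$, and then identifies $uI=A_f^{-1}$ and $a+J=E_f^{-1}$. You argue bottom--up by two inclusions, reducing everything to the two content formulas: for coefficients, the Gauss--Krull lemma over each $D_P$, $P\in\mathfrak{X}(D)$, together with $D=\bigcap_P D_P$; for exponents, the analogous statement over each valuation monoid $S_Q$, $Q\in\mathfrak{X}(S)$. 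Your version is more self-contained (it does not lean on the nontrivial class-group isomorphism from Gilmer), at the price of having to prove two Gauss-type lemmas. The easy inclusion and the coefficient half of the hard inclusion are handled correctly: from $\mathsf{v}_P(A_{fg})\ge 0$ and multiplicativity of the content valuation you do get $\mathsf{v}_P(d_j)+\mathsf{v}_P(c_i)\ge 0$ for all $i,j$ and all $P$, hence $A_gA_f\subseteq D$ and $A_g\subseteq A_f^{-1}$.

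The one step that would fail as written is the exponent inclusion. You justify ``each $s_j+t_i\in S$'' by looking at the extreme monomials with respect to a total order on $G$ compatible with the group operation. That argument only controls $s_{\min}+t_{\min}$ and $s_{\max}+t_{\max}$; the intermediate exponents $s_j+t_i$ can cancel in $fg$, and knowing the two extreme exponents lie in $S$ does not force the others to, except when membership in $S$ is detected by a single order (e.g.\ $S$ a rank-one valuation monoid). For a general Krull monoid you must run, for each $Q\in\mathfrak{X}(S)$, the same leading-form argument you use for coefficients: with $m=\min_j\mathsf{v}_Q(s_j)$ and $m'=\min_i\mathsf{v}_Q(t_i)$, split $f=f_0+f_1$ and $g=g_0+g_1$ according to whether the exponent attains the minimal $\mathsf{v}_Q$-value; then $f_0g_0\neq 0$ because $K[G]$ is a domain, its exponents all have $\mathsf{v}_Q$-value $m+m'$, and they cannot be cancelled by $f_0g_1+f_1g_0+f_1g_1$, whose exponents have strictly larger value. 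Hence $\min\mathsf{v}_Q(E_{fg})=m+m'\ge 0$, and since $S=\{x\in G:\mathsf{v}_Q(x)\ge 0 \text{ for all } Q\in\mathfrak{X}(S)\}$ (as $S$ is Krull), you get $s_j+t_i\in S$ for all $i,j$, i.e.\ $E_g\subseteq E_f^{-1}$. With that replacement your proof is complete; the total order on $G$ is not the right tool here, the height-one valuations of $S$ are.
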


\begin{proof}
As noted in the introduction, we have an isomorphism $\mathcal{C}_v(D) \oplus \mathcal{C}_v(S) \cong\mathcal{C}_v(D[S])  $ via $ ([I],[J]) \mapsto [I[J]]$. Clearly, $fK[G] \cap D[S]$ is a height-one prime ideal of $D[S]$ and hence non-zero divisorial. So there exist divisorial ideals $I \neq (0)$ resp. $J \neq \emptyset$ of $D$ resp. $S$ and $h \in K[G] \setminus \{0\}$ such that $fK[G] \cap D[S] = hI[J]$ (note that $fK[G] \cap D[S] \subseteq K[G]$). It follows that $fK[G] = hK[G]$ and hence $h = uX^af$ for some $u \in K\setminus \{0\}$ and $a \in G$. Thus, $fK[G] \cap D[S] = fuI[a + J]$. A simple computation gives $uI = A_f^{-1}$ and $a+ J = E_f^{-1}$.
\end{proof}

\begin{lemma}\label{lemma:approximation}
Let $D$ be a Krull domain with quotient field $K \neq D$ and let $I$ be a non-zero divisorial ideal of $D$. There exist $a,b \in K$ and $P \in \mathfrak{X}(D)$ such that $I^{-1} = (a,b)_v$ and $\mathsf{v}_P(\frac{a}{b}) = 1$. If $D$ is not semi-local, then there are infinitely many $P \in \mathfrak{X}(D)$ such that there exist $a,b \in K$ with $I^{-1} = (a,b)_v$ and $\mathsf{v}_P(\frac{a}{b}) = 1$.
\end{lemma}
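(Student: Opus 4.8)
The plan is to realise $I^{-1}$ explicitly as a two-generated $v$-ideal by means of the approximation theorem (independence) for the essential valuations $\mathsf{v}_P$, $P \in \mathfrak{X}(D)$, of the Krull domain $D$. Recall that a divisorial fractional ideal $J$ of $D$ is completely determined by its family of valuations $(\mathsf{v}_P(J))_{P \in \mathfrak{X}(D)}$, which vanishes for all but finitely many $P$, and that $\mathsf{v}_P((a,b)_v) = \min(\mathsf{v}_P(a), \mathsf{v}_P(b))$ for every $P \in \mathfrak{X}(D)$ and all $a,b \in K^{\times}$. So it suffices to exhibit $a,b \in K^{\times}$ with $\min(\mathsf{v}_P(a), \mathsf{v}_P(b)) = \mathsf{v}_P(I^{-1})$ for all $P$ and, in addition, $\mathsf{v}_{P_0}(a) - \mathsf{v}_{P_0}(b) = 1$ at one prescribed prime $P_0$.

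First I would fix $P_0 \in \mathfrak{X}(D)$; this is possible since $D \neq K$ is not a field and every non-zero prime of a Krull domain contains a height-one prime. Let $P_1, \dots, P_k$ be the finitely many primes in $\mathfrak{X}(D) \setminus \{P_0\}$ with $n_i := \mathsf{v}_{P_i}(I^{-1}) \neq 0$, and put $n_0 := \mathsf{v}_{P_0}(I^{-1})$. Using approximation, pick $b \in K^{\times}$ with $\mathsf{v}_{P_i}(b) = n_i$ for $i = 0, \dots, k$ and $\mathsf{v}_P(b) \geq 0$ for all other $P \in \mathfrak{X}(D)$; thus $b \in I^{-1}$. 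Let $T$ be the finite set of primes $P \in \mathfrak{X}(D) \setminus \{P_0, \dots, P_k\}$ with $\mathsf{v}_P(b) > 0$. Applying approximation again, pick $a \in K^{\times}$ with $\mathsf{v}_{P_0}(a) = n_0 + 1$, $\mathsf{v}_{P_i}(a) = n_i$ for $i = 1, \dots, k$, $\mathsf{v}_P(a) = 0$ for $P \in T$, and $\mathsf{v}_P(a) \geq 0$ otherwise.

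Then one checks $\min(\mathsf{v}_P(a), \mathsf{v}_P(b)) = \mathsf{v}_P(I^{-1})$ prime by prime: at $P_0$ the minimum is $\min(n_0 + 1, n_0) = n_0$ while $\mathsf{v}_{P_0}(a/b) = 1$; at each $P_i$ it equals $n_i$; at each $P \in T$ the factor $a$ is a unit, so the minimum is $0$; and at every remaining prime both $a$ and $b$ are units. Hence $(a,b)_v$ and $I^{-1}$ have identical valuation data and therefore coincide, which proves the first assertion. For the second assertion, recall that a Krull domain with only finitely many height-one primes is semi-local (indeed a semi-local principal ideal domain); so if $D$ is not semi-local, then $\mathfrak{X}(D)$ is infinite, and carrying out the construction above for each $P_0 \in \mathfrak{X}(D)$ yields the required infinitude of primes.

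I expect the only delicate point to be the bookkeeping with the set $T$: if $a$ were not forced to be a unit at the spurious primes dividing $b$, the ideal $(a,b)_v$ could come out strictly smaller than $I^{-1}$. Beyond that, the argument is a routine application of the independence of the essential valuations of a Krull domain, together with the description of divisorial ideals by their valuation vectors.
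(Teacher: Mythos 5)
Your proof is correct and follows essentially the same route as the paper's: both arguments rest on the approximation theorem for the essential valuations of a Krull domain together with the fact that a divisorial fractional ideal is determined by its valuation vector, via $\mathsf{v}_P((a,b)_v)=\min(\mathsf{v}_P(a),\mathsf{v}_P(b))$. Your execution is in fact slightly cleaner: by constructing both generators from scratch you avoid the paper's case distinction on whether $I^{-1}$ is principal and obtain the stronger statement that the prime $P$ may be prescribed arbitrarily, from which the non-semi-local assertion follows immediately.
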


\begin{proof}
By the Approximation Theorem for Krull domains, there exist $a',b \in K$ such that $I^{-1} = (a',b)_v$. If $I^{-1}$ is a principal fractional ideal, $I$ is a principal ideal, hence we can assume without loss of generality that $b$ is its generator. Now take any $P \in \mathfrak{X}(D)$ and any element $a \in D$ with $\mathsf{v}_P(a) = 1$. In particular, if $D$ is not semi-local, there exist infinitely many such $P$. \\
Now assume that $I^{-1}$ is not a principal fractional ideal. Thus, there exists $P \in \mathfrak{X}(D)$ such that $\mathsf{v}_P(a') \neq \mathsf{v}_P(b)$. Without loss of generality, let $\mathsf{v}_P(a') > \mathsf{v}_P(b)$. By the Approximation Theorem for Krull domains, we may choose $a \in K$ such that
\begin{align*}
\mathsf{v}_Q(a) = \begin{cases} \mathsf v_P(b) + 1 &\mbox{if } Q = P, \\
\mathsf v_Q(a') & \mbox{if } \mathsf v_Q(a') \neq 0 \text{ or } \mathsf v_Q(b) \neq 0 \text{ and } Q \neq P, \\
\mathsf{v}_Q(a) \geq 0  & \mbox{otherwise}, \end{cases}
\end{align*}
for $Q \in \mathfrak{X}(D)$. Then $\min \{ \mathsf v_Q(a),\mathsf v_Q(b) \} = \min \{\mathsf{v}_Q(a'), \mathsf{v}_Q(b)\}$ for all $Q \in \mathfrak{X}(D)$ and hence $I^{-1} = (a,b)_v$. Moreover $\mathsf{v}_P(\frac{a}{b}) =~1$.\\
If $D$ is not semi-local, then $\mathfrak{X}(D)$ is infinite, whence there are infinitely many $P \in \mathfrak{X}(D)$ such that $\mathsf{v}_P(a') = 0 = \mathsf{v}_P(b)$. For each of them, we construct $a$ in the following way using the Approximation Theorem for Krull domains: 
\begin{align*}
\mathsf{v}_Q(a) = \begin{cases} 1 &\mbox{if } Q = P, \\
\mathsf v_Q(a') & \mbox{if } \mathsf v_Q(a') \neq 0 \text{ or } \mathsf v_Q(b) \neq 0 \text{ and } Q \neq P, \\
\mathsf{v}_Q(a) \geq 0  & \mbox{otherwise}, \end{cases}
\end{align*}
for $Q \in \mathfrak{X}(D)$. Then again $\min \{ \mathsf v_Q(a),\mathsf v_Q(b) \} = \min \{\mathsf{v}_Q(a'), \mathsf{v}_Q(b)\}$ for all $Q \in \mathfrak{X}(D)$ and hence $I^{-1} = (a,b)_v$. Moreover $\mathsf{v}_P(\frac{a}{b}) =~1$.
\end{proof}

Kim \cite{Kim} showed that every divisor class of $D[G]$ (where $G \neq \{0\}$)  contains a prime divisor. We copy and modify his proof in such a way that the existence of infinitely many prime divisors in each class follows.

\begin{lemma} \label{lemma:Kim}
Let $G$ be a non-zero abelian group such that $D[G]$ is a Krull domain. Then each divisor class of $D[G]$ contains infinitely many prime divisors.
\end{lemma}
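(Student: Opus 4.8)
The plan is to reduce the statement to a supply of irreducible elements of $K[G]$ produced via Lemma~\ref{lemma:Matsuda}; the first job is therefore to make that lemma applicable. Since $G$ is a group, its only non-empty fractional ideal is $G$ itself, so $\mathcal{C}_v(G)$ is trivial, and the isomorphism $\mathcal{C}_v(D)\oplus\mathcal{C}_v(G)\cong\mathcal{C}_v(D[G])$ shows that every divisor class of $D[G]$ has the form $[I[G]]$ for a non-zero divisorial ideal $I$ of $D$, which after multiplying by a suitable element of $K$ we may assume to satisfy $I\subseteq D$. The key observation is that $G$ contains an element $g$ of height $(0,0,0,\ldots)$: since $D[G]$ is a Krull domain, $G$ satisfies the ACC on cyclic subgroups; if no such $g$ existed, every non-zero element of $G$ would be divisible by some prime, so starting from any $g_0\neq 0$ one could choose primes $p_n$ and non-zero elements $g_n$ with $g_n=p_n g_{n+1}$, and since $G$ is torsion-free the chain $\langle g_0\rangle\subsetneq\langle g_1\rangle\subsetneq\cdots$ would be strictly increasing, a contradiction. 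Fixing such a $g$, Lemma~\ref{lemma:Matsuda} tells us that $a+bX^g$ is irreducible in $K[G]$ for all $a,b\in K\setminus\{0\}$.

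Assume first that $D$ is infinite, and fix a divisor class $c=[I[G]]$ with $I\subseteq D$ divisorial and non-zero. I would fix $b:=1\in I^{-1}$ and use the Approximation Theorem for Krull domains (in the vein of Lemma~\ref{lemma:approximation}) to see that the set of $a\in K\setminus\{0\}$ with $(a,b)_v=I^{-1}$ contains a coset $a_0+\mathfrak{a}$ of a non-zero fractional ideal $\mathfrak{a}$ of $D$: one takes $a_0\in I^{-1}$ with $\mathsf{v}_P(a_0)=-\mathsf{v}_P(I)$ at the finitely many $P\in\mathfrak{X}(D)$ with $I\subseteq P$, and $\mathfrak{a}=\{\,j\in I^{-1}:\mathsf{v}_P(j)>-\mathsf{v}_P(I)\text{ for all }P\supseteq I\,\}$. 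As $D$ is infinite, this coset is infinite, so there are infinitely many admissible $a$. For each of them set $f=a+bX^g$. Then $f$ is irreducible in $K[G]$, we have $A_f^{-1}=I$ (because $(a,b)_v=I^{-1}$ and $I$ is divisorial), and $E_f^{-1}=G$ because $G$ is a group, so Lemma~\ref{lemma:Chang} yields
\[
fK[G]\cap D[G]=f\,A_f^{-1}[E_f^{-1}]=f\,I[G],
\]
which, $f$ being irreducible in $K[G]$, is a height-one prime ideal of $D[G]$ lying in the class $[I[G]]=c$. Finally, $a+bX^g$ and $a'+bX^g$ generate the same ideal of $K[G]$ only if they are associates there, and comparing exponents (using that $2g\neq 0$, since $G$ is torsion-free and $g\neq 0$) this forces $a=a'$; hence distinct admissible $a$ give pairwise distinct prime divisors $f\,I[G]$. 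This produces infinitely many prime divisors of $D[G]$ in the class $c$.

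It remains to treat the case that $D$ is finite, i.e.\ $D=K$ is a finite field; for an infinite field the previous paragraph already applies with $I=D$. Now $\mathcal{C}_v(D[G])$ is trivial, so it suffices to exhibit infinitely many prime divisors of $K[G]$. I would pass to the subring $R:=K[X^g,X^{-g}]$ of $K[G]$, which is isomorphic to $K[t,t^{-1}]$ since $g$ has infinite order; thus $R$ is a principal ideal domain with infinitely many maximal ideals $\mathfrak{q}_1,\mathfrak{q}_2,\ldots$, one for each monic irreducible polynomial over $K$ other than $t$. A generator $f_i$ of $\mathfrak{q}_i$ may be taken to be such an irreducible polynomial; it involves at least two distinct powers of $X^g$, hence is a non-zero non-unit of $K[G]$, and therefore lies in some height-one prime $\mathfrak{p}_i$ of the Krull domain $K[G]$. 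Then $\mathfrak{p}_i\cap R$ is a proper prime ideal of $R$ containing $f_i$, and so equals the maximal ideal $\mathfrak{q}_i$; in particular the $\mathfrak{p}_i$ are pairwise distinct, so $K[G]$ has infinitely many height-one primes.

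The conceptual heart of the argument is the existence of an element of height $(0,0,0,\ldots)$ in $G$: this is exactly what makes Matsuda's lemma available, and it is the point at which the ACC on cyclic subgroups of $G$ (equivalently, that $D[G]$ is Krull) is used essentially. The remaining, more technical difficulty is upgrading ``one prime divisor per class'' to ``infinitely many''; over an infinite $D$ this is achieved by perturbing $a$ inside the fibre $\{a:(a,b)_v=I^{-1}\}$, and it is for this reason that the finite base field has to be handled separately by the direct argument inside $K[G]$.
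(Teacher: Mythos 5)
Your proof is correct, but it is organized quite differently from the paper's. The paper splits on whether $G$ is finitely generated: in the finitely generated case it views $D[G]$ as a Laurent polynomial ring, disposes of finite $\mathfrak{X}(D)$ via the semi-local PID observation, and otherwise gets infinitely many primes by varying the height-one prime $P$ of $D$ (the ``infinitely many $P$'' clause of Lemma~\ref{lemma:approximation}) together with an Eisenstein-type irreducibility criterion; in the non-finitely generated case it fixes $a,b$ and varies the exponent over infinitely many elements of height $(0,0,0,\ldots)$. You instead observe once and for all that the ACC on cyclic subgroups (which holds since $D[G]$ is Krull) forces $G\neq 0$ to contain a \emph{single} element $g$ of height $(0,0,0,\ldots)$ --- your ascending-chain argument for this is correct --- so Lemma~\ref{lemma:Matsuda} applies uniformly, with no dichotomy on $G$; the multiplicity then comes from perturbing the coefficient $a$ inside a coset $a_0+\mathfrak{a}$ of a non-zero fractional ideal on which $(a,1)_v=I^{-1}$ is preserved, which is a valid use of the approximation theorem and yields pairwise non-associated irreducibles since the units of $K[G]$ are trivial. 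This buys you independence from the external Eisenstein criterion and from the second half of Lemma~\ref{lemma:approximation}, at the cost of having to treat $D$ a finite field separately; your count of height-one primes of $K[G]$ via the PID $K[X^g,X^{-g}]$ handles that case correctly (the class group being trivial there). Two small points, both at the same level of implicitness as the paper's own argument: you should exclude $a=0$ from the coset (harmless, as removing one element leaves infinitely many), and the step from ``$f$ irreducible in $K[G]$'' to ``$fK[G]\cap D[G]$ is prime'' uses that $K[G]$ is factorial, which holds precisely because $G$ is torsion-free with the ACC on cyclic subgroups.
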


\begin{proof}
Note that $G$ is torsion-free, because $D[G]$ is Krull. \\
\textbf{Case 1:} Let $G$ be finitely generated. Then there is a positive integer $n$ such that $D[G]$ is isomorphic to the Laurent polynomial ring in $n$ indeterminates over $D$. If $\mathfrak{X}(D)$ is finite, then $D$ is a semi-local principal ideal domain. Hence $D[G]$ is factorial with infinitely many non-associated prime elements. Now let $\mathfrak{X}(D)$ be infinite. Let $I[G]$ be a non-zero divisorial ideal of $D[G]$ where $I$ is a non-zero divisorial ideal of $D$. By Lemma \ref{lemma:approximation}, there exist infinitely many $P \in \mathfrak{X}(D)$ such that there exist $a,b \in K$ with $I^{-1} =(a,b)_v$ and $\mathsf{v}_P(\frac{a}{b}) = 1$. For each choice of $P$ and corresponding $a,b \in K$ with the above properties, we construct a prime divisor lying in $[I[G]]$: Let $\alpha \in G$ with $\alpha > 0$ and set $g = \frac{a}{b} + X^\alpha$. Then $g$ is irreducible in $D_P[G]$ by a generalized version of Eisenstein's criterion \cite[Lemma 5]{Chang}. Hence $g$ is irreducible in $K[G]$ by localization. Moreover, we have by Lemma \ref{lemma:Chang} that $gK[G] \cap D[G] = g (\frac{a}{b},1)^{-1}[G] = g b (a,b)^{-1} [G] = gb I[G]$ and hence $gK[G] \cap D[G]$ is one of infinitely many prime divisors lying in the class of $I[G]$. \\
\textbf{Case 2:} Let $G$ be non-finitely generated. Again, let $I[G]$ be a non-zero divisorial ideal of $D[G]$ where $I$ is a non-zero divisorial ideal of $D$. By Lemma \ref{lemma:approximation}, there exist $a,b \in K$ such that $I^{-1} = (a,b)_v$. Since $G$ is non-finitely generated, there are infinitely many elements $g \in G$ of height $(0,0,0,\ldots)$. For each choice of $g$, the element $f = a + b X^g$ is irreducible in $ K[G]$ by Lemma \ref{lemma:Matsuda}. Moreover, by Lemma \ref{lemma:Chang} we have $fK[G] \cap D[G] = f(a,b)^{-1}[G] = fI[G]$ and hence $fK[G] \cap D[G]$ is one of infinitely many prime divisors lying in the class of $I[G]$.
\end{proof}

\begin{lemma} \label{lemma:field}
Let $K$ be a field and let $S$ be a monoid such that $K[S]$ is a Krull domain. Then every divisor class of $K[S]$ contains infinitely many prime divisor.
\end{lemma}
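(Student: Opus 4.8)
The plan is to describe the height‑one primes of $K[S]$ together with their divisor classes and then to construct, in each class, infinitely many of them inside the group algebra $K[G]$, where $G$ denotes the quotient group of $S$.

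First I would collect the structural facts. As $K[S]$ is Krull, $S$ is a torsion‑free Krull monoid, and Gilmer's isomorphism together with $\mathcal{C}_v(K)=0$ gives $\mathcal{C}_v(K[S]) \cong \mathcal{C}_v(S)$. The ring $K[G]$ is a localisation of $K[S]$ (invert all monomials $X^s$, $s\in S$), hence again a Krull domain, and $\mathcal{C}_v(K[G]) \cong \mathcal{C}_v(K)\oplus\mathcal{C}_v(G)=0$ because $G$ is a group; thus $K[G]$ is factorial and its irreducible elements are prime. Under the localisation map, a height‑one prime of $K[S]$ either survives in $K[G]$ — in which case it equals $fK[G]\cap K[S]$ for a prime element $f$ of $K[G]$ — or it contains a monomial $X^s$ with $s\in S\setminus S^\times$, in which case it arises from a height‑one prime of $S$. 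By Lemma \ref{lemma:Chang}, applied with the field $K$ in place of $D$ so that $A_f=K$ and hence $A_f^{-1}=K$, one has $fK[G]\cap K[S]=fK[E_f^{-1}]$, whose divisor class corresponds to $-[(E_f)_v]\in\mathcal{C}_v(S)$. Since the primes coming from $\mathfrak{X}(S)$ need not meet every class, the point is to realise every class by primes of the first kind, and I would reduce the proof to the assertion: for every class $c\in\mathcal{C}_v(S)$ there are infinitely many pairwise non‑associated prime elements $f$ of $K[G]$ with $[(E_f)_v]=c$. (One quickly sees that $c=[(E_h)_v]$ for $h=X^{t_1}+\dots+X^{t_m}$ with $(t_1,\dots,t_m)_v$ in the class $c$, but factoring $h$ into primes of $K[G]$ only exhibits $c$ as a \emph{sum} of classes of the required type, so a direct construction is unavoidable.)

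For the construction, given $c$ I would pick an integral $v$‑ideal $\mathfrak d\subseteq S$ with $[\mathfrak d]=-c$; then $\mathfrak d^{-1}$ is a $v$‑invertible fractional $v$‑ideal containing $S$, and — using the Approximation Theorem for Krull monoids exactly as in Lemma \ref{lemma:approximation} — it can be generated as a $v$‑ideal by $1$ together with one further element, say $\mathfrak d^{-1}=(1,t)_v$ with $t\in G$. If $t$ can moreover be taken of height $(0,0,0,\dots)$, then $f=a+bX^t$ is irreducible in $K[G]$ by Lemma \ref{lemma:Matsuda}, hence prime, and $(E_f)_v=(1,t)_v=\mathfrak d^{-1}$ has class $-[\mathfrak d]=c$. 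Letting $a,b$ run through $K^\times$ produces infinitely many pairwise non‑associated such $f$ when $K$ is infinite; when $K$ is finite one instead varies $t$ (equivalently $\mathfrak d$ within its class), and in either case the ideals $fK[G]\cap K[S]$ are pairwise distinct prime divisors of $K[S]$ lying in the class $c$.

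The hard part is the choice of exponent: $t$ must simultaneously be of height $(0,0,0,\dots)$ in $G$ and make $(1,t)_v$ lie in the prescribed class. The class condition only pins down the finitely many valuations $\mathsf v_{\mathfrak p}(t)$ at the height‑one primes $\mathfrak p$ of $S$ where $\mathfrak d^{-1}$ is non‑trivial, while the Approximation Theorem leaves the other valuations of $t$ free; the admissible $t$ thus form a coset of the quotient group of a suitable \emph{local} submonoid of $S$, and one must show this coset contains an element of height zero. When $G$ is not finitely generated such elements are abundant and this is easy. When $G\cong\mathbb Z^n$ it amounts to finding a primitive vector in a coset of a subgroup of $\mathbb Z^n$, which is routine once that subgroup has rank at least $2$; the cases of rank $\le 1$ are degenerate, since a Krull monoid whose quotient group has rank at most one has trivial divisor class group and then $K[S]$ is factorial. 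If the available coset is too small, one regains room by replacing the binomial $a+bX^t$ with a linear polynomial $a_0+a_1X^{t_1}+\dots+a_mX^{t_m}$ whose exponents generate a free direct summand of $G$ (automatically prime in $K[G]$) and arranging its content $(0,t_1,\dots,t_m)_v$ to fall in the class $c$. Finally I would verify the two structural claims used above — that the primes $fK[G]\cap K[S]$ for $f$ prime in $K[G]$, together with those arising from $\mathfrak{X}(S)$, exhaust $\mathfrak{X}(K[S])$, and that non‑associated $f$ yield distinct height‑one primes of $K[S]$ — both of which follow directly from the localisation relationship between $K[S]$ and $K[G]$.
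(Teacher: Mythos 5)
Your construction hinges on the step ``$\mathfrak d^{-1}$ \dots\ can be generated as a $v$-ideal by $1$ together with one further element, say $\mathfrak d^{-1}=(1,t)_v$, using the Approximation Theorem for Krull monoids exactly as in Lemma \ref{lemma:approximation}.'' This is precisely the gap that this paper was written to close: Lemma \ref{lemma:approximation} is a statement about Krull \emph{domains}, and the Approximation Theorem does \emph{not} carry over to Krull monoids (see \cite[Theorem 26.4]{HK98} and the explicit Counterexample in Section \ref{section:examples}, built from the block monoid $\mathcal B(G_0)$ with $G_0=\{-2g,-g,g,2g\}$). Likewise, the assertion that fractional $v$-ideals of such monoids are $v$-generated by two elements is Kim's \cite[Corollary 5]{Kim}, refuted by \cite[Beispiel 3.8]{Skula}. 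So the class $c$ need not be representable by any $(1,t)_v$, and the binomial $a+bX^t$ together with Lemma \ref{lemma:Matsuda} cannot reach every class. Your fallback --- a linear polynomial $a_0+a_1X^{t_1}+\dots+a_mX^{t_m}$ whose exponents generate a free direct summand of $G$ --- does not repair this: the exponents are forced to be a $v$-generating set of a fractional ideal in the prescribed class, and there is no reason (and no argument given) that such a generating set can be chosen to be part of a basis of $G$; moreover when $G$ is not free (e.g.\ not finitely generated) the notion does not even apply to the generic situation. This is essentially the same trap Kim and Chang fell into.

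The paper's proof circumvents the issue in two genuinely different ways. For $S$ reduced and non-finitely generated, it takes an \emph{arbitrary} finite $v$-generating set $g_1,\dots,g_n$ of $I^{-1}$, chooses one of the infinitely many $P\in\mathfrak X(S)$ with $\mathsf v_P(g_1)=\dots=\mathsf v_P(g_n)=0$ and an $a\in S$ with $\mathsf v_P(a)=1$, and proves that $g=X^{g_1}+\dots+X^{g_n}+X^{g_n+a}$ is irreducible by an Eisenstein-type leading-term analysis in $K[S_P]$ using the splitting $S_P\cong\N_0\times S_P^\times$ --- no control over the heights of the exponents or over the number of generators is needed. For $S$ reduced and finitely generated, no elementary exponent argument is offered at all; instead the paper invokes Kainrath's theorem \cite[Theorem 2]{Kain} on finitely generated algebras over Hilbertian fields, a substantially stronger input than anything in your sketch. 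Your overall reduction (localise to the factorial ring $K[G]$, identify classes via $E_f$, and Lemma \ref{lemma:Chang}) is sound and matches the paper's framework, but without a replacement for the two-generation step the argument does not go through.
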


\begin{proof}
If $S$ is a Krull monoid, then it has a decomposition $S\cong S_{red}\times S^\times$, where $S_{red}$ denotes the reduced monoid of $S$ \cite[Theorem 2.4.8.2]{GHK}. Thus, by \cite[Theorem 7.1]{Gil84} we have $K[S] \cong K[S_{red}][S^\times]$, which is a Krull group algebra over $K[S_{red}]$. Therefore, if $S$ is non-reduced then $S^\times$ is non-trivial and we can use Lemma \ref{lemma:Kim} to show that every divisor class contains infinitely many prime divisors. So, from now on, assume that $S$ is reduced.\\
Let $G$ be the quotient group of $S$.

\textbf{Case 1:} First assume that $S$ is finitely generated. Then, since $S$ is torsion-free, it is isomorphic to an additive submonoid of the group $(\Z^m,+)$ for some $m \in \N$ \cite[page 50]{Bruns}. Therefore $G$ is a $\Z$-submodule of the free $\Z$-module $\Z^m$ and hence free, say of rank $n \in \N$.

\smallskip
\noindent
\textbf{Claim A:} There exists a $\Z$-basis $B$ of $G$ such that $B \cap S \neq \emptyset$.
\begin{proof}[Proof of Claim A]
If $n = 0$ or $n = 1$, this is trivial. So let $n >1$. Then in particular $S \neq G$, because $S$ is reduced. Let $P\subseteq S$ be a height-one prime ideal of $S$ and $\mathsf{v}_P: G \to \Z$ the associated $P$-adic valuation. Let $a \in S$ with $\mathsf{v}_P(a) = 1$. Let $G_0$ be the kernel of $\mathsf{v}_P$. Then $G_0$ is a free $\Z$-module of rank $n-1$ and we have $G \cong \ker \mathsf{v}_P \oplus \im \mathsf{v}_P \cong G_0 \oplus \Z$. So, if $a_1,\ldots,a_{n-1}$ is a $\Z$-basis of $G_0$, then $a_1,\ldots,a_{n-1},a$ is a $\Z$-basis of $G$ with $a \in S$.
\qedhere[Proof of Claim A]
\end{proof}
Now let $(a_1,\ldots,a_n)$ be a $\Z$-basis of $G$ with $a_1 \in S$. We define $\mathcal{O} = K[a_1]$ with quotient field $F = K(a_1)$. By $L = K(a_1,\ldots,a_n)$ we denote the quotient field of $K[S]$. Then $K[S]$ is a finitely generated $\mathcal{O}$-algebra, $F$ is a Hilbertian field (as a finitely generated transcendental extension of a field), the Krull dimension of $\mathcal{O}$ equals $1$, $\mathcal{O}$ and $K[S]$ are integrally closed, $L/F$ is a purely transcendental extension and hence separabel and regular, and $\Pic(\mathcal{O}) = 0$. Therefore by \cite[Theorem 2]{Kain}, $K[S]$ has infinitely many prime divisors in all classes and we are done.

\textbf{Case 2:} Now let $S$ be non-finitely generated. We have an isomorphism of divisor class groups $\mathcal{C}_v(S) \to \mathcal{C}_v(K[S])$ via $[I] \mapsto [K[I]]$ (\cite[Theorem 16.7]{Gil84}). Therefore it suffices to prove that $[K[I]]$ contains a prime ideal of $K[S]$ for every non-empty $v$-ideal $I$ of $S$. So let $I$ be a non-empty $v$-ideal of $S$ and let $g_1,\ldots,g_n \in G$ with $I^{-1} = ((g_1+S) \cup \ldots \cup (g_n + S))_v$. \\
Since $S$ is a reduced non-finitely generated Krull monoid, it follows from \cite[Theorem 2.7.14]{GHK} that the set $\mathfrak X(S)$ of height-one prime ideals of $S$ is infinite. Thus, there exist infinitely many $P \in \mathfrak{X}(S)$ such that $0 = \mathsf{v}_P(g_1) = \ldots = \mathsf{v}_P(g_n)$. For each choice of $P$, we construct a prime divisor lying in $[K[I]]$ (and infinitely many of them are pairwise distinct). Let $a \in S$ with $\mathsf{v}_P(a) = 1$. Let $g = X^{g_1} + \ldots + X^{g_n} + X^{g_n+a} \in K[G]$.

\smallskip
\noindent
\textbf{Claim B:} $g$ is irreducible in $K[G]$. \\
Suppose that the claim holds true. On the one hand, $I^{-1} = ((g_1+S) \cup \ldots \cup (g_n+S))_v = ((g_1+S) \cup \ldots \cup (g_n+S) \cup (g_n + a +S))_v$, because $a \in S$. On the other hand, by Lemma \ref{lemma:Chang}, we have that $gK[G] \cap K[S] = g K[((g_1+S) \cup \ldots \cup (g_n+S) \cup (g_n + a +S))^{-1}] = gK[I_v] = gK[I]$. Therefore, $gK[G] \cap K[S]$ is one of infinitely many prime ideals of $K[S]$ (because $g$ is irreducible by the claim) that lies in the divisor class of $K[I]$.

\noindent
\begin{proof}[Proof of Claim B]
It suffices to show that $g$ is irreducible in $K[S_P]$, because $K[G]$ is the localization of $K[S_P]$ at the set $\{X^s \mid s \in P\}$ and $K[S_P]$ is a factorial domain.\\
To see that $g \in K[S_P]$ is irreducible, let $h_1, h_2 \in K[S_P]$ with $g = h_1h_2$. Since $S_P$ is a discrete rank one valuation monoid, the map $\N_0 \times S_P^\times \to S_P$ via $ (n,s) \mapsto sa^n$ is an isomorphism (note that $a \in S_P$ was chosen with $\mathsf{v}_P(a) =1$). We endow $\N_0 \times S_P^\times$ with a total order compatible with the monoid operation in the following way: $\N_0$ carries the canonical order $\leq$. On $S_P^\times$ take any total order $\leq$ compatible with the group operation, which is possible by~\cite[Corollary 3.4]{Gil84}. Now $\N_0 \times S_P^\times$ is a totally ordered monoid with lexicographic order, i.e.,
\begin{align*}
(m,s) \leq (n,t) \Leftrightarrow (m < n \lor (m = n \land s \leq t)),
\end{align*}
for $m,n \in \N_0$ and $s,t \in S_P^\times$.\\
Now we can write $h_1 = \sum_{i=1}^n k_i X^{(a_i,b_i)}$ and $h_2 = \sum_{j=1}^m l_j X^{(c_j,d_j)}$ with $k_i, l_j \in K \setminus \{0\}$, $(a_i,b_i),(c_j,d_j) \in \N_0 \times S_P^\times$, and $(a_1,b_1) < (a_2,b_2) < \ldots < (a_n,b_n)$, $(c_1,d_1) < (c_2,d_2) < \ldots < (c_m,d_m)$. Also $g = X^{(0,g_1)} + \ldots + X^{(0,g_n)} + X^{(1,g_n)}$ in this notation. \\
It follows that $(1, g_n) = (a_n,b_n) + (c_m,d_m) = (a_n+c_m,b_n+d_m)$, thus $1 = a_n + c_m$. Without loss of generality, we can suppose that $a_n = 0$ and $c_m =1$. Therefore $a_i = 0$ for all $i \in \{1, \ldots,n\}$. Let $j \in \{1, \ldots, m\}$ be minimal such that $c_j = 1$. Then the monomial in $h_1h_2 = g$ with exponent $(a_1,b_1) + (1,d_j) = (1,b_1+d_j)$ has a non-zero coefficient and therefore it holds that $(a_1,b_1) + (c_j,d_j) = (1,b_1+d_j) = (1,g_n) = (1,b_n+d_m) = (a_n,b_n) + (c_m,d_m)$. It follows that $n = 1$ and hence $h_1 = k_1X^{(0,b_1)} \in K[S_P]$ is a unit. This proves that $g \in K[S_P]$ is irreducible. \qedhere[Proof of Claim B]
\end{proof}
\end{proof}

\begin{proof}[Proof of the Theorem]
Let $K$ be the quotient field of $D$ and $G$ be the quotient group of $S$. If $D = K$ or $S = G$, we are done by Lemma \ref{lemma:Kim} and Lemma \ref{lemma:field}. So assume that $D \neq K$ and $S \neq G$. We have an isomorphism $ \mathcal{C}_v(D) \oplus \mathcal{C}_v(S) \to \mathcal{C}_v(D[S])$ via $([I],[J]) \mapsto [I[J]]$. Hence it suffices to prove that every class of the form $[I[J]]$ contains a prime divisor, where $I$ (resp. $J$) is a non-zero (resp. non-empty) $v$-ideal of $D$ (resp. $S$). So let $I$ be a non-zero $v$-ideal of $D$ and let $J$ be a non-empty $v$-ideal of $S$. \\
By Lemma \ref{lemma:approximation}, let $a,b \in K$ and $P \in \mathfrak{X}(D)$ such that $I^{-1} = (a,b)_v$ and $\mathsf{v}_P(\frac{a}{b}) = 1$. Define $p = \frac{a}{b}$. Then $p$ is a prime element of $D_P$. Moreover, let $g_1,\ldots,g_n \in G$ such that $J^{-1} = ((g_1 + S) \cup \ldots \cup (g_n + S))_v$. For each non-negative integer $m$ we can choose $h_1,\ldots,h_m \in J^{-1}$ such that $g_1, \ldots, g_n, h_1, \ldots, h_m$ are pairwise distinct. This works out, because $S$ is torsion-free and hence infinite. Let $h$ be the maximal element in $\{g_1,\ldots,g_n,h_1,\ldots,h_m\}$ for some fixed total order on $G$ and set $M = \{g_1,\ldots,g_n,h_1, \ldots,h_m\} \setminus \{h\}$. Now we set $g = X^h + \sum_{a \in M} pX^a \in D_P[G]$. Then $g$ is prime in $D_P[G]$ by \cite[Lemma 5]{Chang}. Since $K[G]$ is the localization of $D_P[G]$ at all non-zero constants and $gD_P[G]$ is a prime ideal not containing non-zero constants, it follows that $g \in K[G]$ is prime. Moreover, we have $gK[G] \cap D[S] = g (p,1)^{-1}[((g_1+S) \cup \ldots \cup (g_n+S) \cup (h_1 + S) \cup \ldots \cup (h_m + S))^{-1}] = g b (a,b)^{-1} [J] = gb I[J]$, where the first equality follows by Lemma \ref{lemma:Chang}. Hence $gK[G] \cap D[S]$ is a prime divisor lying in the class of $I[J]$. By varying $m$, the construction leads to non-associated prime elements $g \in K[G]$ and hence the assertion follows.

\end{proof}

\section{Past proof attempts} \label{section:examples}

As noted in the beginning of this work, Chang \cite{Chang} states the fact that in a Krull monoid algebra $D[S]$ every divisor class contains a prime divisor. Unfortunately, due to an error in Lemma 7 of his work (Krull monoids in general do not satisfy the approximation property, see \cite[Theorem 26.4]{HK98}), the proof of his main result collapses. Another error that occurs in Chang's argumentation is due to a mistake in \cite[Corollary 5]{Kim}, where Kim asserts that fractional $v$-ideals of Krull monoids $S$ such that $D[S]$ is a Krull domain are always $v$-generated by two elements. A counterexample to this assertion can be found in \cite[Beispiel 3.8]{Skula}. In the following, we give an explicit counterexample to Chang's Lemma 7.

\begin{counterexample}
This example is based on zero-sum theory. For an introduction to this topic, see \cite[Chapter 5]{GHK}. Let $G_0 = \{-2g,-g,g,2g\} \subseteq G$, where $G$ is an abelian group and $g \in G$ is an element of infinite order. The monoid $\mathcal{B}(G_0)$ of zero-sum sequences over $G_0$ is a finitely generated reduced torsion-free Krull monoid that is not factorial. Its height-one spectrum is $\mathfrak{X}(\mathcal{B}(G_0)) = \{\mathfrak{p}_i \mid i \in G_0\}$, where $\mathfrak{p}_i = \{ S \in \mathcal{B}(G_0) \mid i \in \text{supp}(S) \}$. Now set $\alpha_1 = (-2g) \cdot (-2g) \cdot 2g \cdot 2g \cdot (-g) \cdot (-g) \cdot g \cdot g \in \mathcal{B}(G_0)$ and $\alpha_2 = \alpha_1 \cdot \alpha_1 \in \mathcal{B}(G_0)$. Thus, whenever we take $a \in \mathcal{B}(G_0)$, the element $\alpha_2 + a - \alpha_1$ has $\mathfrak{p}_i$-adic valuation at least $2$ for every $i \in G_0$, which is contradiction to Lemma 7 in \cite{Chang}.
\end{counterexample}

\providecommand{\bysame}{\leavevmode\hbox to3em{\hrulefill}\thinspace}
\providecommand{\MR}{\relax\ifhmode\unskip\space\fi MR }
\providecommand{\MRhref}[2]{%
  \href{http://www.ams.org/mathscinet-getitem?mr=#1}{#2}
}
\providecommand{\href}[2]{#2}


\begin{thebibliography}{1}

\bibitem{AHZ93}
D.D. Anderson, E.G. Houston and M. Zafrullah, \emph{t-linked extensions, the t-class group, and Nagata's theorem}, J. of Pure and Appl. Algebra \textbf{86} (1993), 109--124.


\bibitem{Bourbaki}
N. Bourbaki, \emph{Commutative Algebra}, Springer-Verlag, 2nd printing, 1989.


\bibitem{Bruns}
W. Bruns and J. Gubeladze, \emph{Rings, Polytopes and K-Theory}, Springer, 2009.


\bibitem{Chang}
G.W. Chang, \emph{Every divisor class of Krull monoid domains contains a prime ideal}, J. Algebra \textbf{336} (2011), 370--377.

\bibitem{Chou81}
L. Chouinard, \emph{Krull semigroups and divisor class groups}, Canad. J. Math. \textbf{33} (1981), 1459--1468.

\bibitem{Fuchs}
L. Fuchs, \emph{Abelian groups}, Pergamon Press, 1960.

\bibitem{Gabelli}
S. Gabelli, \emph{On divisorial ideals in polynomial rings over Mori domains}, Comm. Algebra \textbf{15} (1987), 2349--2370.

\bibitem{GHK}
A.~Geroldinger and F.~Halter-Koch, \emph{Non-{U}nique {F}actorizations.
  {A}lgebraic, {C}ombinatorial and {A}nalytic {T}heory}, Pure and Applied
  Mathematics, vol. 278, Chapman \& Hall/CRC, 2006.


\bibitem{Gil72}
R. Gilmer, \emph{Multiplikative Ideal Theory}, Marcel Dekker, New York, 1972.


\bibitem{Gil84}
R. Gilmer, \emph{Commutative semigroup rings}, Chicago Lectures in Mathematics, 1984.



\bibitem{HK98}
F. Halter-Koch, \emph{Ideal Systems: An Introduction to Multiplicative Ideal Theory}, Marcel Dekker, 1998.


\bibitem{Kain}
F. Kainrath, \emph{The distribution of prime divisors in finitely generated domains}, manuscripta math. \textbf{100} (1999), 203--212.


\bibitem{Kim}
H. Kim, \emph{The distribution of prime divisors in {K}rull monoid domains}, J. Pure and Appl. Algebra \textbf{155} (2001), 203--210.

\bibitem{Mat1}
R. Matsuda, \emph{On Algebraic Properties of Infinite Group Rings}, Bull. Fac. Sci. Ibaraki Univ. Math. \textbf{7} (1975), 29--37.

\bibitem{Mat2}
R. Matsuda, \emph{Infinite Group Rings II}, Bull. Fac. Sci. Ibaraki Univ. Math. \textbf{8} (1976), 43--66.

\bibitem{Mat3}
R. Matsuda, \emph{Torsion-Free Abelian Group Rings III}, Bull. Fac. Sci. Ibaraki Univ. Math. \textbf{9} (1977), 1--49.

\bibitem{Skula}
L. Skula, \emph{Divisorentheorie einer Halbgruppe}, Math. Z. \textbf{114} (1970), 113--120.

\bibitem{Sm}
D.~Smertnig, \emph{Every abelian group is the class group of a simple
  {D}edekind domain}, Trans. Amer. Math. Soc. \textbf{369} (2017), 2477--2491.

\end{thebibliography}
\end{document}